\newtheorem{theorem}{Theorem}[section]
\newtheorem{lemma}[theorem]{Lemma}
\newtheorem{corollary}[theorem]{Corollary}
\newtheorem{proposition}[theorem]{Proposition}
\theoremstyle{definition}
\newtheorem{definition}[theorem]{Definition}
\theoremstyle{remark}
\newtheorem{remark}[theorem]{Remark}
\numberwithin{equation}{section}
\DeclareMathOperator{\dimH}{dim_H}
\DeclareMathOperator{\proj}{proj}
\DeclareMathOperator{\Int}{int}
\newcommand{\pr}{\mathbb{R}}
\newcommand{\spt}{\text{spt}\, }
\newcommand{\Om}{\Omega}
\newcommand{\om}{\omega}
\newcommand{\hm}{\mathcal{H}}
\newcommand{\ga}{\gamma}
\newcommand{\dd}{\delta}
\newcommand{\be}{\beta}
\newcommand{\ob}{\overline{\beta}}
\newcommand{\al}{\alpha}
\newcommand{\oa}{\overline{\alpha}}
\newcommand{\x}{\times}
\renewcommand{\tilde}{\widetilde}
\begin{document}
\title[Projection Theorem and Geodesic Flows]
{Besicovitch-Federer projection theorem and geodesic flows on Riemann surfaces}

\author[R. Hovila]{Risto Hovila$^1$}
\address{Department of Mathematics and Statistics, P.O.Box 68,
         00014 University of Hel\-sin\-ki, Finland$^1$}
\email{risto.hovila@helsinki.fi}

\author[E. J\"arvenp\"a\"a]{Esa J\"arvenp\"a\"a$^2$}
\address{Department of Mathematical Sciences,  P.O. Box 3000,
         90014 University of Oulu, Finland$^{2,3}$}
\email{esa.jarvenpaa@oulu.fi$^2$}

\author[M. J\"arvenp\"a\"a]{Maarit J\"arvenp\"a\"a$^3$}
\email{maarit.jarvenpaa@oulu.fi$^3$}

\author[F. Ledrappier]{Fran\c cois Ledrappier$^4$}
\address{LPMA, UMR 7599, Universit\'e Paris 6, 4, Place Jussieu, 75252 Paris
         cedex 05, \linebreak France$^4$}
\email{fledrapp@nd.edu$^4$}

\thanks{We acknowledge the support of the Centre of
Excellence in Analysis and Dynamics Research funded by the Academy of Finland.
RH acknowledges the support from Jenny and Antti Wihuri Foundation.
FL acknowledges the partial support from NSF Grant
DMS-0801127.}

\subjclass[2000]{28A80, 37C45, 53D25, 37D20}
\keywords{Projection, Hausdorff dimension, unrectifiability, geodesic flow,
          invariant measure}

\begin{abstract}
We extend the Besicovitch-Federer projection theorem to transversal
families of mappings. As an application
we show that on a certain class of Riemann surfaces with constant negative
curvature and with boundary, there exist natural 2-dimensional measures
invariant under the geodesic flow having 2-dimensional supports such that their
projections to the base manifold are 2-dimensional but the supports of the
projections are Lebesgue negligible. In particular, the union
of complete geodesics has Hausdorff dimension 2 and is Lebesgue negligible.
\end{abstract}

\maketitle

\section{Introduction}\label{intro}

A {\it{pair of pants}} $S$ is  a 2-sphere minus three points endowed with a
metric of constant curvature $-1$ in such a way that the boundary consists of
three closed geodesics of length $a$, $b$ and $c$ called the {\it{cuffs}}. The
metric is uniquely determined by these three lengths. (For more details, 
see e.g. \cite{H}.) For each point $x$ in $S$, write $\Om _x$ for
the set of unit tangent vectors $v\in T_x^1S$ such that the geodesic ray
$\ga_v(t), t\geq 0,$ with initial condition $(x,v)$ never meets the boundary 
$\partial S$ of
$S$. The set $\Om_x$ is a Cantor set of dimension $\dd =\dd(a,b,c)$. The
number $\dd$ is an important geometric invariant of the pair of pants $S$: it
is the critical exponent of the Poincar\'e series of $\pi_1(S)$ and the
topological entropy of the geodesic flow on $T^1 S$ (cf. \cite{Su}). We will
recall in Section \ref{application} why the function $(a,b,c )\mapsto\dd$ is
real analytic. In particular, the function $a\mapsto\dd(a,a,a)$ is
continuous from $(0,\infty )$ onto the open set $(0,1)$. In a very similar
setting, McMullen (\cite {MM}) gives asymptotics  for $1-\dd(a,a,a)$ when
$a\to 0$ and for $\dd(a,a,a)$ when $a\to\infty.$

We are interested in the set
\begin{equation}\label{PiNW}
\begin{aligned}
C(S) := \{x\in S\mid \ &{\textrm {there exists }}v\in T^1_x S{\textrm{ such that }}\\
                  \ & v\in\Om_x {\textrm { and }} -v\in\Om_x\}.
\end{aligned}
\end{equation}
In other words, $C(S)$ is the set of points in complete geodesics in $S$. 
Let 
\begin{equation}\label{NW}
D(S) := \{(x,v)\in T^1S\mid x\in C(S),v\in\Om_x,-v\in\Om_x\}
\end{equation}
be the subset of $T^1S$ where the geodesic flow is defined for all 
$t\in\mathbb R$. Clearly, $\Pi(D(S))=C(S)$, where $\Pi:T^1S\to S$, 
$\Pi(x,v)=x$, is the canonical projection. 

We write $\mathcal L^l$ and $\mathcal H^s$ to denote 
the $l$-dimensional Lebesgue measure and the $s$-dimensional Hausdorff 
measure. For the Hausdorff dimension we use the notation $\dimH$. 

We consider the following theorem:

\begin{theorem}\label{negligible}
With the above notation,
\begin{itemize} 
  \item[-]  $\mathcal L^2(C(S))>0$ provided that $\dd > 1/2$ and 
  \item[-]  $\dimH C(S) = 1+ 2\dd$ and $\mathcal L^2(C(S))=0$ provided that
$\dd \leq 1/2$.
\end{itemize}
\end{theorem}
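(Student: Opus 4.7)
\medskip
\noindent\textbf{Proof plan.}
The plan is to realise $D(S)$ as the non-wandering set of the geodesic flow on $T^1S$ and to exploit two complementary tools: Bowen--Margulis--Sullivan theory, which presents $D(S)$ as a set of Hausdorff dimension exactly $1+2\dd$ carrying a flow-invariant measure $\mu$ with a local product structure along the strong stable/unstable foliations; and the transversal Besicovitch--Federer theorem that is the main result of the paper. Since $\Pi\colon T^1S\to S$ is Lipschitz, one immediately obtains the upper bound $\dimH C(S)\leq 1+2\dd$, which already gives $\mathcal L^2(C(S))=0$ whenever $\dd<1/2$.

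The crucial observation is that the geodesic flow $g_t$ preserves $D(S)$, so $\Pi(g_t(D(S)))=\Pi(D(S))=C(S)$ for every $t\in\pr$. I would therefore regard $\Pi_t:=\Pi\circ g_t$ as a one-parameter family of maps from $T^1S$ to $S$ and verify that, on $D(S)$, it satisfies the transversality hypothesis of the generalised Besicovitch--Federer theorem. Transversality should reduce to negative curvature: two distinct $g_t$-orbits whose footpoints coincide at some instant come from geodesics meeting transversally in $S$, so their footpoints separate linearly in $t$.

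With transversality in hand, both regimes fall out in parallel. For $\dd>1/2$ one has $\dimH D(S)=1+2\dd>2$, and the positive-measure half of the transversal Marstrand theorem applied to $\mu$ yields $\mathcal L^2(\Pi_t(D(S)))>0$ for almost every $t$. The flow-invariance identity $\Pi_t(D(S))=C(S)$ upgrades this to every $t$, in particular to $t=0$, giving $\mathcal L^2(C(S))>0$. For $\dd\leq 1/2$ the dimension half of the same theorem gives $\dimH\Pi_t(D(S))=\min\{1+2\dd,2\}=1+2\dd$ for almost every $t$, and flow-invariance again supplies this equality at $t=0$, matching the Lipschitz upper bound.

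The delicate case is the borderline $\dd=1/2$, where $\dimH D(S)=2$ exactly and dimension alone cannot force the $\mathcal L^2$-negligibility. Here the Besicovitch--Federer side of the transversal theorem is essential, and the step I expect to be the main obstacle is to show that the measure $\mu$ is purely $2$-unrectifiable, i.e.\ charges no Lipschitz $2$-graph in $T^1S$. This should follow from the Cantor nature of $\Om_x$: the transverse slices of $D(S)$ are limit sets of a non-elementary Fuchsian group and admit no tangent direction at any point, so pairing the two transverse Cantor factors with the flow direction cannot produce a rectifiable $2$-piece. Granted pure $2$-unrectifiability, the transversal Besicovitch--Federer theorem yields $\mathcal L^2(\Pi_t(D(S)))=0$ for almost every $t$, and flow-invariance once more delivers the conclusion at $t=0$.
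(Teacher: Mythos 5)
Your high-level strategy -- combine pure $2$-unrectifiability of $D(S)$ with the transversal Besicovitch--Federer theorem and deduce $\mathcal L^2(C(S))=0$ for the borderline $\dd=1/2$, while using the Lipschitz upper bound and known dimension results for $\dd\neq 1/2$ -- is exactly the paper's strategy. But the specific transversal family you propose cannot work, and this is the central gap.

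You take $\Pi_t:=\Pi\circ g_t\colon T^1S\to S$, a one-parameter family, so in the notation of Definition~\ref{transversality} you have $n=3$, $m=2$, $l=1$. Theorem~\ref{main} explicitly assumes $m\le l$, and Condition~(3) of the transversality definition requires the $m\times l$ matrix $D_\lambda T_{x,y}$ to have a uniformly nondegenerate $m\times m$ Gram determinant; with $l=1<2=m$ this matrix has rank at most $1$, the Gram determinant is identically zero, and the condition can never be satisfied. So $\{\Pi_t\}_{t}$ is not a transversal family in the required sense, regardless of curvature. (Your flow-invariance remark $\Pi_t(D(S))=C(S)$ for all $t$ is correct, and would indeed upgrade ``a.e.\ $t$'' to ``$t=0$'' if you had the null result for a.e.\ $t$; but you never get there.) What the paper does instead is \emph{reduce the dimensions first}: using the local factorization from \cite[Section 3]{JJL}, $\Pi|_U=\psi_2\circ P\circ\psi_1$ with $\psi_1,\psi_2$ bi-Lipschitz and $P(y_1,y_2,t)=(P_t(y_1,y_2),t)$, where $\{P_t\colon I^2\to I\}_{t\in I}$ is transversal with $n=2$, $m=1$, $l=1$. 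The set $\psi_1(D(S)\cap U)$ splits as $E\times I$ with $E\subset I^2$ purely $1$-unrectifiable, Theorem~\ref{main} is applied to $E$ and the family $\{P_t\}$, and Fubini in the preserved $t$-coordinate finishes. Without this dimension reduction -- or some other genuinely $\ge 2$-parameter family of maps $T^1S\to S$ -- the transversal Besicovitch--Federer theorem simply does not apply.

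Two smaller points. First, your argument for pure $2$-unrectifiability (``transverse slices are limit sets, admit no tangent direction'') is only a heuristic; the paper proves it concretely by exhibiting $D(S)$ as bi-Lipschitz equivalent, in pieces, to $\Om\x\Om\x I$ with $\Om$ an Ahlfors-$1/2$-regular Cantor set, and then invoking the standard fact that a product of two $1/2$-dimensional Cantor sets is purely $1$-unrectifiable (\cite[Example~15.2]{Ma}). You would need the symbolic product structure to make the unrectifiability claim precise. Second, for $\dd>1/2$ you appeal to ``the positive-measure half of the transversal Marstrand theorem'', but Theorem~\ref{main} is a Besicovitch--Federer theorem and has no such half; the $\dd\neq 1/2$ cases are instead handled in the paper by the Lipschitz upper bound together with the prior results of \cite{LL} and \cite{JJL}, not by the new projection theorem.
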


It is known that $\dimH(D(S))=1+2\dd$ (see 
Section~\ref{application}). Ledrappier and 
Lindenstrauss proved in \cite{LL} (see \cite{JJL} for a different proof) that
$\Pi$ does not diminish the Hausdorff dimension of
a measure which is invariant under the geodesic flow. 
The new part of our result is when $\dd $ is exactly
$1/2$. In that case, \cite{LL} implies that $\dimH C(S) = 2$, and we sharpen 
this by proving that $C(S)$ is Lebesgue negligible. 

The main technical part of
our paper is the following extension of Besicovitch-Federer projection theorem 
to transversal families of maps. (For the definition of transversality, 
see Definition \ref{transversality}.) We believe that Theorem \ref{main} is of 
independent interest (see for example \cite{OS}), and therefore 
we verify it in a more general setting  than needed for the purpose of proving 
Theorem \ref{negligible}.

\begin{theorem}\label{main}
Let $E\subset\mathbb R^n$ be $\mathcal H^m$-measurable with
$\mathcal H^m(E)<\infty$. Assume that $\Lambda\subset\pr^l$ is open and
$\{P_\lambda:\mathbb R^n\to\mathbb R^m\}_{\lambda \in \Lambda}$ is a transversal
family of maps. Then $E$ is purely
$m$-unrectifiable, if and only if $\mathcal H^m(P_\lambda(E))=0$ for
$\mathcal L^l$-almost all $\lambda\in\Lambda$.
\end{theorem}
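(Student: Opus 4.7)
The plan is to prove the two implications separately, adapting the scheme of the classical Besicovitch--Federer projection theorem (as developed for the family of orthogonal projections onto $m$-planes, e.g.\ in Mattila's book) to the parametric family $\{P_\lambda\}_{\lambda\in\Lambda}$ by systematically replacing each use of an orthogonal projection by an appeal to the transversality hypothesis.

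For the ``not purely $m$-unrectifiable $\Rightarrow$ some projections have positive measure'' direction, which is the contrapositive of one implication, I would start from a Lipschitz image $M\subset E$ with $\mathcal{H}^m(M)>0$. After restricting to a subset of positive measure I may assume (via Rademacher and a Lusin-type argument) that $M$ lies on a $C^1$ $m$-submanifold $N\subset\mathbb{R}^n$ with a well-defined tangent plane $T=T_{x_0}N$ at an $\mathcal{H}^m|_M$-density point $x_0\in M$. The quantitative transversality of $\{P_\lambda\}$, read off at the linear level on the derivatives $DP_\lambda(x_0)$, should force $DP_\lambda(x_0)|_T$ to have rank $m$ for a positive-measure (indeed, essentially full-measure) set of parameters $\lambda$. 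For each such $\lambda$ the composition of $P_\lambda$ with a local parametrization of $N$ near $x_0$ is a local diffeomorphism, hence bi-Lipschitz on a smaller neighbourhood, so that $\mathcal{H}^m(P_\lambda(M))>0$.

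The ``purely $m$-unrectifiable $\Rightarrow$ $\mathcal{L}^l$-a.e.\ projection is $\mathcal{H}^m$-null'' direction is the main work and is where Besicovitch--Federer enters. I would argue by contradiction: assuming $\mathcal{H}^m(P_\lambda(E))>0$ on a positive-measure set of parameters, I would extract a positive-$\mathcal{H}^m$-measure $m$-rectifiable subset of $E$. The classical scheme splits into a density step (positivity of $\int_\Lambda \mathcal{H}^m(P_\lambda(E))\,d\mathcal{L}^l(\lambda)$ forces $E$ to have uniformly positive lower density, in a suitable averaged sense, around $\mathcal{H}^m$-a.e.\ $x\in E$), a cone/Marstrand-type step (upgrading this averaged density into the existence of an approximate tangent $m$-plane at $x$), and a rectifiability step (Federer's criterion then contradicts pure unrectifiability). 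In the classical setting each orthogonal projection $\pi_V$ is exactly linear, so the cone arguments zoom in freely; here each $P_\lambda$ is only $C^1$, so the linearization $P_\lambda(y)\approx P_\lambda(x)+DP_\lambda(x)(y-x)$ has to be controlled uniformly at small scales, and a Fubini-type argument over $\Lambda$, combined with the transversality assumption, must substitute the linear maps $DP_\lambda(x)$ for the orthogonal projections in the classical cone estimates without losing measure.

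The step I expect to be hardest is exactly this linearization: making sure that in the density and cone arguments at arbitrarily small scales around a typical $x\in E$ the transversality hypothesis (most likely formulated, in the style of Peres--Schlag, by prescribing how fast $\lambda\mapsto P_\lambda(x_1)-P_\lambda(x_2)$ varies in comparison to $|x_1-x_2|$) is quantitative enough to absorb the curvature error terms of the family and to play the role that rotational invariance of the Grassmannian $G(n,m)$ plays in the classical argument. Once this uniform linearization is secured, the rest should be a careful, but essentially routine, transcription of the Besicovitch--Federer proof into the parametric setting.
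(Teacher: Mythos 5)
Your proposal follows essentially the same route as the paper: the ``if'' direction is handled exactly as you describe, by reducing to $C^1$ submanifolds and showing via transversality that $D_xP_\lambda$ restricted to the tangent plane has rank $m$ for $\mathcal{L}^l$-a.e.\ $\lambda$ (the paper's Lemma~\ref{rankm}), and the ``only if'' direction transcribes Mattila's cone decomposition ($E_{1,\delta}$, $E_{2,\delta}$, $E_3$) with transversality and a Fubini argument over $\Lambda$ playing the role of rotational invariance of the Grassmannian. You also correctly flag the substantive difficulty as controlling the nonlinear cones $X(a,r,\lambda,s)$ via the parameter derivative $D_\lambda T_{x,a}$, which is precisely where the paper's Lemma~\ref{E1-3} does its work.
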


In \cite{HJJL} we showed that on any Riemann surface with (variable) negative
curvature there exist 2-dimensional measures which are invariant under 
the geodesic flow and have singular
projections with respect to $\mathcal L^2$. The
measures are supported by the whole unit tangent bundle $T^1S$ and they are 
singular
with respect to $\mathcal H^2$ on $T^1S$. However, the measures constructed in 
this 
paper have 2-dimensional supports and they are absolutely continuous with 
respect to 
$\mathcal H^2$ on $T^1S$. Thus their singularity is due to the projection.  

The paper is organized as follows: In Section~\ref{results} we introduce the
notation and prove Theorem~\ref{main}. In
Section~\ref{application} we recall basic properties of the geodesic flow
on a pair of pants and prove Theorem~\ref{negligible} as an application of
Theorem~\ref{main}.

\section{Projections}\label{results}

In this section we prove Theorem \ref{main} as a consequence of several lemmas.
In the case of orthogonal projections in
$\pr^n$, one can find a proof for the ``only if''-part of Theorem \ref{main} in
\cite[Chapter 18]{Ma}
or in \cite[Chapter 3.3]{F}. The main idea of our proof is same as that of
\cite{Ma}, but, due to our more
general setting, some modifications are naturally needed -- the major ones 
being in Lemma~\ref{E1-3}. 
For the convenience of the reader we give the main arguments.
In fact, our  
approach simplifies slightly the corresponding arguments in \cite{Ma}. 

In this section $\Lambda\subset\pr^l$ is open and $l,m$ and $n$ are integers 
with $m\le l$ and $m<n$. The closed ball with radius $r$ centred at $x$ is 
denoted by $B(x,r)$. As in \cite{Ma}, a non-negative, subadditive set function
vanishing for the empty set is called a measure. We start by defining cones 
around preimages of points with respect to Lipschitz continuous  mappings.

\begin{definition}\label{cones}
Let $\lambda\in\Lambda$ and let 
$P_\lambda:\mathbb R^n\to\mathbb R^m$ be Lipschitz
continuous. For all $a\in\mathbb R^n$, $0<s<1$ and $r>0$, we define
\begin{align*}
X(a,\lambda,s)& :=\{x\in\mathbb R^n\mid |P_\lambda(x)-P_\lambda(a)|
                  <s|x-a|\}{\textrm{ and }}\\
X(a,r,\lambda,s)& :=X(a,\lambda,s)\cap B(a,r).
\end{align*}
\end{definition}

The following lemma is an analogue of \cite[Corollary 15.15]{Ma}.

\begin{lemma}\label{E1}
Suppose that $E\subset\pr^n$ is purely $m$-unrectifiable. Let $\delta>0$ and
$\lambda\in\Lambda$. Defining 
\[
E_{1,\delta}(\lambda) :=\{a\in E\mid\limsup_{s\to 0}\sup_{0<r<\delta}(rs)^{-m}
  \mathcal H^m(E\cap X(a,r,\lambda,s))=0\},
\]
we have $\mathcal H^m(E_{1,\delta}(\lambda))=0$.
\end{lemma}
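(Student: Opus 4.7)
The plan is to argue by contradiction. Assume $\hm^m(E_{1,\delta}(\lambda)) > 0$; the goal is to extract an $m$-rectifiable subset of $E$ of positive $\hm^m$ measure, contradicting pure $m$-unrectifiability. The strategy parallels \cite[Cor.~15.15]{Ma}, where the analogous result for orthogonal projections follows from the rectifiability criterion \cite[Thm.~15.14]{Ma}; here, since $P_\lambda$ is merely Lipschitz, I will establish rectifiability directly, by producing a positive-measure piece of $E$ on which $P_\lambda$ is bi-Lipschitz.

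First I decompose $E_{1,\delta}(\lambda)$ into countably many $\hm^m$-measurable pieces carrying uniform, quantitative cone control:
\[
F_{k,j} := \bigl\{ a \in E \pv (rs)^{-m} \hm^m(E \cap X(a,r,\lambda,s)) < 1/k \text{ for all } 0<s<1/j,\ 0<r<\delta \bigr\}.
\]
By the definition of $E_{1,\delta}(\lambda)$, one has $E_{1,\delta}(\lambda) = \bigcap_{k} \bigcup_{j} F_{k,j}$, and since $F_{k,j}$ is increasing in $j$, a diagonal argument yields indices $j_k$ and a subset $F \subset E_{1,\delta}(\lambda)$ with $\hm^m(F) > 0$ on which, for every $k$, the bound $\hm^m(E \cap X(a,r,\lambda,s)) < (rs)^m/k$ holds whenever $s < 1/j_k$ and $r < \delta$. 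Measurability of each $F_{k,j}$ follows from Lipschitz continuity of $P_\lambda$.

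Since $\hm^m(E) < \infty$, standard upper-density theorems furnish a point $a \in F$ with $\limsup_{r \to 0} r^{-m} \hm^m(F \cap B(a,r)) \geq \eta$ for some $\eta > 0$. Choose $k$ large and $s < 1/j_k$ small so that $s^m/k \ll \eta$; then for arbitrarily small $r < \delta$ with $\hm^m(F \cap B(a,r)) \geq (\eta/2) r^m$, the cone bound gives $\hm^m(F \cap B(a,r) \cap X(a,\lambda,s)) \leq (rs)^m/k \ll \hm^m(F \cap B(a,r))$, so most $y \in F \cap B(a,r)$ satisfy $|P_\lambda(y) - P_\lambda(a)| \geq s|y-a|$. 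Using the symmetry $y \in X(x,\lambda,s) \Leftrightarrow x \in X(y,\lambda,s)$ and integrating the cone bound against $\hm^m \lfloor F$ in the product, the set of pairs $(x,y) \in (F \cap B(a,r))^2$ violating $|P_\lambda(x) - P_\lambda(y)| \geq s|x-y|$ has $\hm^m \times \hm^m$ measure negligible compared to $\hm^m(F \cap B(a,r))^2$. A Markov/independent-set extraction then produces $F' \subset F$ with $\hm^m(F') > 0$ on which the two-point lower bound holds for every pair; combined with the Lipschitz upper bound, $P_\lambda|_{F'}$ is a bi-Lipschitz embedding into $\pr^m$, whence $F'$ is $m$-rectifiable---the desired contradiction.

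The chief obstacle is this final extraction: upgrading the centered cone smallness (a one-parameter condition for each $a$) to a genuinely two-sided bi-Lipschitz lower bound holding simultaneously for every pair in a positive-measure subset. Fubini and Markov supply an abundance of good pairs, but pruning to a positive-measure subset on which \emph{every} pair is good requires a careful covering or greedy-selection argument, and this is precisely the point where Mattila's orthogonal-projection proof needs to be adapted to the general Lipschitz setting. Once the extraction is in place, the rest of the argument is routine.
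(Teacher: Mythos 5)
Your overall strategy---produce a positive-measure piece of $E$ on which $P_\lambda$ is bi-Lipschitz, hence an $m$-rectifiable piece, contradicting pure unrectifiability---is the right one, and your decomposition into the sets $F_{k,j}$, the diagonalisation to a positive-measure $F$, the choice of a density point, and the Fubini computation showing that most pairs in $F\cap B(a,r)$ satisfy the two-point lower bound are all sound. But the step you yourself flag as the ``chief obstacle'' is a genuine and fatal gap, not a routine covering argument waiting to be written out.

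``Most pairs are good'' in the $\hm^m\times\hm^m$ sense does not yield a positive-measure subset on which \emph{every} pair is good. This already fails for the simplest symmetric bad set: take $B=\{(x,y)\in[0,1]^2 : |x-y|<\varepsilon\}$, whose product measure is $O(\varepsilon)$, yet any $Y\subset[0,1]$ avoiding $B$ off the diagonal is $\varepsilon$-separated and hence null. The cone relation $\{(x,y):|P_\lambda(x)-P_\lambda(y)|<s|x-y|\}$ is likewise not uniformly separated near the diagonal, so no Markov or greedy selection can extract a positive-measure ``clique'' from the Fubini estimate alone. The correct mechanism---this is the content of \cite[Lemma 15.14]{Ma}, which the paper simply invokes---is qualitatively different: one shows that the cone of \emph{half} the aperture is actually \emph{empty} of $A$ near almost every $a\in A$, not merely of small measure. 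The two ingredients are (i) if $y\in X(a,\lambda,s/2)$ then the whole ball $B(y,\tfrac{s}{4}|y-a|)$ sits inside $X(a,\lambda,s)$, and (ii) $y$, being a typical point of $A\subset E$, has upper $m$-density at least $2^{-m}$, which gives a lower bound $\hm^m(E\cap B(y,t))\ge c_m t^m$ for a suitable scale $t$ comparable to $s|y-a|$. Playing these against the hypothesis $\hm^m(E\cap X(a,r,\lambda,s))\le\epsilon(rs)^m$ forces $\epsilon$ to exceed a dimensional constant, a contradiction when $\epsilon$ is chosen small. Only then does \cite[Lemma 15.13]{Ma} (the Lipschitz-graph criterion) deliver the bi-Lipschitz piece you want. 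You also locate the difficulty in ``adapting the orthogonal-projection proof to the Lipschitz setting''; in fact that part is trivial, since Lemmas 15.13, 15.14 and Corollary 15.15 in \cite{Ma} use nothing about $Q_V$ beyond Lipschitz continuity. The paper's one-line proof---replace $Q_V$ by $P_\lambda$ and run the same argument---is therefore complete, whereas your route needs the density-at-$y$ idea above to close the gap.
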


\begin{proof} Replacing $Q_V$ by $P_\lambda$ in 
\cite[Lemmas 15.13 and 15.14]{Ma} and observing that the Lipschitz constant
of $Q_V$ is one, the proof of \cite[Corollary 15.15]{Ma} 
works in our setting.
Here $Q_V$ is the projection onto the orthogonal complement $V^\perp$ of an 
$m$-plane going through the origin.
\end{proof}

Next we consider the analogue of \cite[Lemmas 18.3 and 18.4]{Ma} 
in our setting. The proof of \cite[Lemma 18.3]{Ma} relies on the fact that 
$Q_V(\{x\in B(a,r)\mid\vert Q_V(x-a)\vert<s\vert x-a\vert\})$
$=U(Q_V(a),rs)\cap V^\perp$ where $U(z,r)$ is the open ball with centre at $z$ 
and with radius $r$. 
Note that this does not hold when $Q_V$ is replaced by $P_\lambda$. 
However, the proof given in \cite[Lemma 3.3.9]{F} works in our setting. 

\begin{lemma}\label{E2}
Let $E\subset\pr^n$ with $\mathcal H^m(E)<\infty$, $\delta>0$ and
$\lambda\in\Lambda$. Defining
\[
E_{2,\delta}(\lambda) :=\{a\in E\mid\limsup_{s\to 0}\sup_{0<r<\delta}(rs)^{-m}
  \mathcal H^m(E\cap X(a,r,\lambda,s))=\infty\}
\]
and
\[
E_3(\lambda) :=\{a\in E\mid\#(E\cap P_\lambda^{-1}(P_\lambda(a)))=\infty\},
\]
we have 
$\mathcal H^m(P_\lambda(E_{2,\delta}(\lambda)))=0$ and
$\mathcal H^m(P_\lambda(E_3(\lambda)))=0$.
\end{lemma}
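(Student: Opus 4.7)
The plan is to prove the two claims separately, each by a covering argument that substitutes cone geometry for the linear structure of orthogonal projections used in Mattila's proof.

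\textbf{For $E_{2,\delta}(\lambda)$.} The first step is to notice the direct inclusion
\[
P_\lambda(X(a,r,\lambda,s)) \subset B(P_\lambda(a), rs),
\]
immediate from the defining inequality $|P_\lambda(x) - P_\lambda(a)| < s|x-a|$ combined with $|x-a| \le r$. Fix an arbitrary $t>0$. For each $a \in E_{2,\delta}(\lambda)$ the $\limsup$ condition lets me extract a pair $(r_a, s_a)$ with $r_a < \delta$, $r_a s_a$ as small as I wish, and $\mathcal H^m(E \cap X(a, r_a, \lambda, s_a)) \ge t(r_a s_a)^m$. A $5r$-covering applied in $\mathbb R^m$ to $\{B(P_\lambda(a), r_a s_a)\}$ yields a disjoint subfamily $\{B(P_\lambda(a_i), r_i s_i)\}$ whose fivefold enlargements cover $P_\lambda(E_{2,\delta}(\lambda))$. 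The crucial point is that disjointness of these image balls forces disjointness of $E \cap X(a_i, r_i, \lambda, s_i)$ in $\mathbb R^n$, since they lie in pairwise disjoint $P_\lambda$-preimages. Summation then gives
\[
t \sum_i (r_i s_i)^m \le \sum_i \mathcal H^m(E \cap X(a_i, r_i, \lambda, s_i)) \le \mathcal H^m(E) < \infty,
\]
bounding $\mathcal H^m(P_\lambda(E_{2,\delta}(\lambda)))$ by a constant multiple of $\mathcal H^m(E)/t$; sending $t \to \infty$ completes this part.

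\textbf{For $E_3(\lambda)$.} The cleanest route is the Eilenberg-type coarea inequality valid for Lipschitz $f:\mathbb R^n \to \mathbb R^m$,
\[
\int^{*} \#(E \cap f^{-1}(y))\, d\mathcal H^m(y) \le c_m (\lip f)^m \mathcal H^m(E),
\]
which, together with $\mathcal H^m(E) < \infty$, forces $\mathcal H^m$-almost every fibre to be finite, so $\mathcal H^m(P_\lambda(E_3(\lambda))) = 0$. A self-contained variant closer to Federer \cite[3.3.9]{F} runs as follows. Discard the $\mathcal H^m$-null set of non-density points of $E$ (whose Lipschitz image is still null). For each $y \in P_\lambda(E_3(\lambda))$ pick $N$ distinct density points $a_1, \ldots, a_N$ in the fibre, mutually separated by some $\epsilon_y>0$; choose small radii $r_i < \epsilon_y/3$ with $\mathcal H^m(E \cap B(a_i, r_i)) \ge c r_i^m$, so the $N$ disjoint pieces combine to $\gtrsim N (\min_i r_i)^m$ of $E$-mass inside a single image ball $B(y, L \max_i r_i)$. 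A Vitali disjointification in $\mathbb R^m$ converts this into $\mathcal H^m(P_\lambda(E_3(\lambda))) \lesssim \mathcal H^m(E)/N$, which vanishes as $N\to\infty$.

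\textbf{Main obstacle.} The principal difficulty compared to Mattila's orthogonal-projection setting is the nonlinearity of $P_\lambda$: fibres are no longer parallel affine subspaces and preimages of balls are no longer fixed-width slabs. What substitutes for this is, in both steps, the cone inclusion above together with the lifting of disjointness from the image space to $\mathbb R^n$ via $P_\lambda$-preimages. In the density-based proof of the second part, a subsidiary technical nuisance is that $\Theta^{*m}(E, a) \ge 2^{-m}$ is only a $\limsup$, so ``good'' radii at distinct fibre points need not be coordinated; allowing per-point radii $r_i$ and using a $\max$-radius covering ball in $\mathbb R^m$ is the standard workaround.
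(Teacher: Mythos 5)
Your two main arguments track the paper's own proof exactly: the paper handles $E_{2,\delta}(\lambda)$ by citing Federer \cite[3.3.9]{F}, whose content is the Vitali-type covering argument you reconstruct (the key step being that $P_\lambda(X(a,r,\lambda,s))\subset B(P_\lambda(a),rs)$ converts disjointness of image balls into disjointness of the sets $E\cap X(a_i,r_i,\lambda,s_i)$), and it handles $E_3(\lambda)$ by citing Mattila \cite[Theorem~7.7]{Ma}, which is precisely the Eilenberg--coarea inequality $\int^*\#(E\cap P_\lambda^{-1}(y))\,d\mathcal H^m(y)\le c_m(\lip P_\lambda)^m\mathcal H^m(E)$ you invoke as your ``cleanest route.'' So the proposal is correct and essentially identical to the paper on both claims.

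One caution about the ``self-contained variant'' you offer for $E_3$: as written, it does not close. The upper density $\Theta^{*m}(E,a)\ge 2^{-m}$ only yields, for each $a_i$ in the fibre, a sequence of good radii tending to $0$, and these cannot be coordinated across the $N$ points. Your workaround produces $E$-mass at least $cN(\min_i r_i)^m$ inside the preimage of $B(y, L\max_i r_i)$, but the Vitali argument needs mass comparable to $N(\max_i r_i)^m$; when the good radii differ wildly across $i$, the bound degenerates and gives no factor of $N$. If you want a genuinely self-contained argument that avoids invoking the coarea inequality as a black box, a cleaner route is a measurable selection: split $E_3(\lambda)$ into disjoint pieces $A_1,\dots,A_N$ each projecting onto $P_\lambda(E_3(\lambda))$, note $\mathcal H^m(P_\lambda(E_3(\lambda)))\le(\lip P_\lambda)^m\min_i\mathcal H^m(A_i)\le (\lip P_\lambda)^m\mathcal H^m(E)/N$, and let $N\to\infty$. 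That is in spirit what Mattila~7.7 proves, and it sidesteps density points entirely.
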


\begin{proof}
The first claim can be verified in the same way as \cite[Lemma 3.3.9]{F} 
and the latter one follows from \cite[Theorem 7.7]{Ma}. 
\end{proof}

Throughout the rest of this section we assume that the family 
$\{P_\lambda:\pr^n \to \pr^m\}_{\lambda\in\Lambda}$ is transversal. We use 
a slight variant of the $\beta=0$ case of the definition of 
$\beta$-transversality given in \cite[Definition 7.2]{PS}. 

\begin{definition}\label{transversality} Let $\Lambda\subset\pr^l$ be open. 
A family of maps $\{P_\lambda:\pr^n \to \pr^m\}_{\lambda \in \Lambda}$ 
is transversal if it satisfies the following 
conditions for each compact set $K\subset\pr^n$: 
\begin{itemize}
\item [(1)] The mapping $P:\Lambda\times K\to\pr^m$, 
  $(\lambda,x)\mapsto P_\lambda(x)$, is continuously differentiable and twice 
  differentiable with respect to $\lambda$.  
\item [(2)] For $j= 1,2$ there exist constants $C_j$ such that the derivatives 
  with respect to $\lambda$ satisfy 
  \[
   \Vert D_{\lambda}^j P(\lambda, x)\Vert \leq C_j \text{ for all }(\lambda,x) 
    \in\Lambda\times K. 
  \]
\item [(3)] For all $\lambda\in\Lambda$ and $x$, $y\in K$ with $x\ne y$, 
  define 
  \[
   T_{x, y}(\lambda) := \frac{P_\lambda(x) - P_\lambda(y)}{|x-y|}. 
  \]
  Then there exists a constant $C_T>0$ such that the property 
  \[
   |T_{x, y}(\lambda)|\leq C_T
  \] 
  implies that
  \[
   \det \left( D_\lambda T_{x, y}(\lambda) \left(D_\lambda T_{x, y}(\lambda)
    \right)^T \right) \geq C_T^2.
  \]
\item [(4)] There exists a constant $C_L$ such that
  \[
   \Vert D_{\lambda}^2 T_{x, y}(\lambda)\Vert\le C_L
  \]
  for all $\lambda\in\Lambda$ and $x,y\in K$ with $x \neq y$.
\end{itemize}
\end{definition}
 
Next we verify the analogue of \cite[Lemma 18.9]{Ma}.

\begin{lemma}\label{E1-3}
Let $E\subset\pr^n$ be $\mathcal H^m$-measurable with $\mathcal H^m(E)<\infty$
and  let $\delta>0$. For $\mathcal L^l$-almost all $\lambda\in\Lambda$ we have
for $\mathcal H^m$-almost all $a\in E$ either
\begin{align}
\limsup_{s\to 0}\sup_{0<r<\delta}(rs)^{-m}\mathcal H^m(E\cap X(a,r,\lambda,s))&=0
   \text{ or}\\
\limsup_{s\to 0}\sup_{0<r<\delta}(rs)^{-m}\mathcal H^m(E\cap  X(a,r,\lambda,s))&=
  \infty\text{ or}\\
(E\setminus\{a\})\cap P_\lambda^{-1}(P_\lambda(a))\cap B(a,\delta)\neq\emptyset.
\end{align}
\end{lemma}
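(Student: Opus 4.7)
The plan is to adapt the proof of \cite[Lemma 18.9]{Ma} to the transversal setting. The main new ingredient is a quantitative preimage-size estimate derived from Definition~\ref{transversality}: for every compact $\Lambda_0\subset\Lambda$ and every compact $K\subset\pr^n$ there is a constant $C$ so that
\[
\mathcal L^l\big(\{\lambda\in\Lambda_0\pv |T_{a,x}(\lambda)|<s\}\big)\leq C s^m
\]
for all $a,x\in K$ with $a\neq x$ and all $s\in(0,C_T]$. I would prove this by noting that condition (3) of Definition~\ref{transversality} makes the Jacobian $D_\lambda T_{a,x}(\lambda)$ uniformly surjective onto $\pr^m$ wherever $|T_{a,x}(\lambda)|\leq C_T$; conditions (1), (2) and (4) supply the uniform $C^2$-regularity needed to describe $\{|T_{a,x}|<s\}\cap\Lambda_0$ as a controlled tubular $s$-neighborhood of an $(l-m)$-dimensional $C^2$-graph and to make $C$ independent of $(a,x)$.

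Multiplying by $\chi_{B(a,r)}(x)$ and integrating against $\mathcal H^m|_E$ in the variable $x$, Fubini yields the basic bound
\[
\int_{\Lambda_0}\mathcal H^m(E\cap X(a,r,\lambda,s))\,d\mathcal L^l(\lambda)\leq Cs^m\,\mathcal H^m(E\cap B(a,r))
\]
for every $a\in E\cap K$, $r>0$, and $s\in(0,C_T]$.

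From here I would follow the structure of \cite[Lemma 18.9]{Ma}. It suffices to show that the set
\[
G:=\{(a,\lambda)\in E\times\Lambda\pv \text{none of (2.1), (2.2), (2.3) holds}\},
\]
restricted to any compact box $K\times\Lambda_0$, has zero $\mathcal H^m\otimes \mathcal L^l$ measure: then Fubini gives the conclusion. After discarding an $\mathcal H^m$-null set, one may assume every $a\in E$ satisfies the standard upper density bound $\mathcal H^m(E\cap B(a,r))\leq 2^m r^m$ for all small $r$. Decompose $G=\bigcup_{M\in\mathbb N} G_M$ so that on $G_M$ the limsup value lies in $[1/M,M]$; for each $M$, at every scale $s$ the positivity of the limsup combined with the failure of (2.3) produces a point $x=x(a,\lambda,s)\in E\cap X(a,r,\lambda,s)\setminus\{a\}$ with $T_{a,x}(\lambda)\neq 0$, and combining with the displayed Fubini estimate and a dyadic summation over scales one obtains $\mathcal L^l(G_{M,a})=0$ for $\mathcal H^m$-a.e.\ $a$.

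The main technical obstacle is precisely this last step: checking that Mattila's Besicovitch-type covering argument, which in the orthogonal case exploits the identity $Q_V(X(a,r,Q_V,s))=U(Q_V(a),rs)\cap V^\perp$, still goes through for the general $P_\lambda$. The uniform Lipschitz bound on each $P_\lambda$ coming from condition (2) of Definition~\ref{transversality} substitutes for the Lipschitz constant $1$ used in \cite[Lemmas 15.13--15.14]{Ma}, and the Fubini estimate above takes the place of the Grassmannian change-of-variables formula.
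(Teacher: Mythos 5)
Your sublevel-set estimate $\mathcal L^l(\{\lambda\in\Lambda_0 : |T_{a,x}(\lambda)|<s\})\le Cs^m$ is correct (it is the standard quantitative consequence of conditions (1)--(4)), and so is the Fubini bound
\[
\int_{\Lambda_0}\mathcal H^m(E\cap X(a,r,\lambda,s))\,d\mathcal L^l(\lambda)\le Cs^m\,\mathcal H^m(E\cap B(a,r)).
\]
However, this is where your argument has a genuine gap. Combined with the a.e.\ upper density bound $\mathcal H^m(E\cap B(a,r))\le 2^mr^m$, the Fubini bound only says that $\int_{\Lambda_0}(rs)^{-m}\mathcal H^m(E\cap X(a,r,\lambda,s))\,d\lambda$ is uniformly bounded in $r,s$. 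That is an \emph{average} bound in $\lambda$; it does not push the $\limsup$ to the extremes $0$ or $\infty$, and it is perfectly consistent with the limsup lying in $[1/M,M]$ on a set of $\lambda$ of positive measure (at scales depending on $\lambda$). You acknowledge that the ``main technical obstacle'' is making Mattila's covering argument go through, but the sketch you give (decompose into $G_M$, produce points $x(a,\lambda,s)$, do a dyadic summation) does not actually produce the dichotomy; this is the step that is missing, not a routine one.

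The paper takes a different route that does yield the dichotomy. Fix $a$, and slice the parameter ball $B(\lambda_0,2\delta_0)\subset\pr^l$ by $m$-dimensional affine planes $V_{\lambda_1}$. On each slice one defines the auxiliary set function
\[
\Psi_{V_{\lambda_1}}(A)=\sup_{0<r<\delta}r^{-m}\mathcal H^m\bigl(E\cap B(a,r)\cap L_{V_{\lambda_1}}(A)\bigr),\qquad L_{V_{\lambda_1}}(A)=\bigcup_{\lambda\in A}P_\lambda^{-1}(P_\lambda(a)),
\]
checks that $\Psi_{V_{\lambda_1}}$ vanishes off the $\sigma$-compact set $C_{V_{\lambda_1}}$ of parameters for which the fibre meets $E\setminus\{a\}$ within $B(a,\delta)$, and applies Mattila's Theorem~18.5 -- an $m$-dimensional density dichotomy for a measure on $\pr^m$ at $\mathcal H^m$-a.e.\ point of a set of measure zero -- on the slice. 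This is exactly what forces the limit superior to be $0$ or $\infty$ unless $\lambda$ lies in $C_{V_{\lambda_1}}$; no Fubini-average estimate replaces it. A second Fubini (over the family of parallel slices) upgrades this to $\mathcal L^l$-a.e.\ $\lambda$. The remaining work is the sandwich inclusion: for the finite family of coordinate $m$-planes, for small $s$,
\[
\bigcup_{j}B(a,r)\cap L_{V_\lambda^j}\bigl(B(\lambda,C^{-1}s)\bigr)\setminus\{a\}\ \subset\ X(a,r,\lambda,s)\ \subset\ \bigcup_{j}B(a,r)\cap L_{V_\lambda^j}\bigl(B(\lambda,Cs)\bigr)\setminus\{a\},
\]
which is where transversality enters (via \cite[Lemma 3.3]{JJN} for the upper inclusion and the bound on $\Vert D_\lambda T_{x,a}\Vert$ for the lower one) and which converts the slice density dichotomy into the dichotomy for $X(a,r,\lambda,s)$. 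In short: your Fubini estimate is a valid and useful observation, but the heart of the lemma is the $m$-plane slicing plus Theorem~18.5, and your proposal does not supply a substitute for that step.
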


\begin{proof}
Given $\delta>0$ and $a\in E$, we prove that for
$\mathcal L^l$-almost all $\lambda\in\Lambda$ either (2.1), (2.2) or (2.3) 
holds. Then the claim follows by Fubini's theorem. The measurability arguments
needed for applying Fubini's theorem are similar as those in 
\cite[Lemma 3.3.2]{F}. 
We may clearly
suppose that $E\subset K$ for some compact $K\subset\pr^n$, and furthermore, by
\cite[Theorem 1.10]{Ma} $E$ may be assumed to be $\sigma$-compact.

Fix $a\in E$, $\lambda_0\in\Lambda$ and $0<\delta<\delta_0$ such that
$B(\lambda_0,2\delta_0)\subset\Lambda$. Let $V\subset\pr^l$ be an
$m$-dimensional linear subspace and let $V_{\lambda_1}=V+\lambda_1$ for all
$\lambda_1\in\Lambda$. For all $\lambda_1\in B(\lambda_0,\delta_0)$,
define a measure $\Psi_{V_{\lambda_1}}$ on
$B(\lambda_0,2\delta_0)\cap V_{\lambda_1}$ by
\[
\Psi_{V_{\lambda_1}}(A) := \sup_{0<r<\delta}r^{-m}\mathcal H^m(E\cap B(a,r)\cap
  L_{V_{\lambda_1}}(A))
\]
for all $A\subset B(\lambda_0,2\delta_0)\cap V_{\lambda_1}$, where
\[
L_{V_{\lambda_1}}(A) :=\bigcup_{\lambda\in A}P_\lambda^{-1}(P_\lambda(a)).
\]

The set
\[
C_{V_{\lambda_1}} :=\{\lambda\in B(\lambda_0,2\delta_0)\cap V_{\lambda_1}\mid
   (E\setminus\{a\})\cap L_{V_{\lambda_1}}(\{\lambda\})\cap B(a,\delta)
   \neq\emptyset\}
\]
is $\mathcal H^m$-measurable. This follows from the
fact that it is $\sigma$-compact which can be seen as follows: defining
a continuous function
\[
\tilde P :\left(B(\lambda_0,2\delta_0)\cap V_{\lambda_1}\right)\times\pr^n\to
  \pr^m,\; \tilde P(\lambda,x) :=P_\lambda(x)-P_\lambda(a),
\]
and $\sigma$-compact sets
\[
S_1 :=\{(\lambda,x)\in\left(B(\lambda_0,2\delta_0)\cap V_{\lambda_1}\right)\x
  \pr^n\mid \tilde P(\lambda, x) = 0 \}
\]
and
\[
 S_2 :=S_1\cap\left(B(\lambda_0,2\delta_0)\x\left((E\setminus\{a\})\cap 
  B(a,\delta)\right)\right),
\]
we conclude that $C_{V_{\lambda_1}} = \Pi_\Lambda(S_2)$, where
$\Pi_\Lambda:\Lambda\x\pr^n\to\Lambda$ is the projection 
$\Pi_\Lambda(\lambda,x)=\lambda$. Thus $C_{V_{\lambda_1}}$ is 
$\sigma$-compact. 

Let 
$D_{V_{\lambda_1}} :=(B(\lambda_0,2\delta_0)\cap V_{\lambda_1})\setminus
   C_{V_{\lambda_1}}$.
From the definitions of $\Psi_{V_{\lambda_1}}$ and $C_{V_{\lambda_1}}$ we deduce 
that $\Psi_{V_{\lambda_1}}(D_{V_{\lambda_1}})=0$. Now
\cite[Theorem 18.5]{Ma} implies that for $\mathcal H^m$-almost all
$\lambda\in B(\lambda_0,\delta_0)\cap V_{\lambda_1}$ either
\begin{equation}\label{ad1}
 \limsup_{t\downarrow 0} t^{-m}\Psi_{V_{\lambda_1}}\left( B(\lambda_0,2\delta_0)
   \cap V_{\lambda_1}\cap B(\lambda, t) \right) = 0
\end{equation}
or
\begin{equation}\label{ad2}
 \limsup_{t\downarrow 0} t^{-m}\Psi_{V_{\lambda_1}}\left( B(\lambda_0,2\delta_0)
   \cap V_{\lambda_1}\cap B(\lambda,t) \right) = \infty
\end{equation}
or
\begin{equation}\label{ad3}
 \lambda \in C_{V_{\lambda_1}}.
\end{equation}

Applying Fubini's theorem we see that for $\mathcal L^l$-almost
all $\lambda\in B(\lambda_0,\delta_0)$ either \eqref{ad1}, \eqref{ad2} or
\eqref{ad3} holds with $V_{\lambda_1}$ replaced by $V_\lambda$.
(The measurability proofs needed here can be dealt with 
in a similar manner as those in \cite[Lemma 3.3.3]{F}.)
Note that here the exceptional set of $\mathcal L^l$-measure zero depends on
the $m$-plane $V$. Hence it is sufficient to find a finite collection 
of linear $m$-planes $V^1,\dots,V^k\subset\pr^l$ 
and $C>0$ such that for all $\lambda\in B(\lambda_0,\delta_0)$
\begin{align*}
\bigcup_{j=1}^k &B(a,r)\cap L_{V_\lambda^j}
(B(\lambda_0,2\delta_0)\cap V_\lambda^j\cap B(\lambda,C^{-1}s))\setminus\{a\}
  \subset X(a,r,\lambda,s)\\
&\subset\bigcup_{j=1}^kB(a,r)\cap L_{V_\lambda^j}
(B(\lambda_0,2\delta_0)\cap V_\lambda^j\cap B(\lambda,Cs))\setminus\{a\}
\end{align*}
for every small enough $s>0$. Indeed, by \cite[Lemma 3.3]{JJN} there are 
$C>0$ and $s_0>0$
such that for any $0<s<s_0$ and for any $x\in X(a,r,\lambda,s)$
there exists an $m$-dimensional coordinate plane $W$ such that
$x\in L_{W_\lambda}(B(\lambda_0,2\delta_0)\cap W_\lambda\cap B(\lambda,Cs))$, 
giving the latter inclusion for the 
collection of all $m$-dimensional coordinate planes in $\mathbb R^l$. Finally, 
the first
inclusion is true for any $m$-plane since, by transversality,
$\Vert D_\lambda T_{x,a}(\lambda)\Vert$ is bounded.
\end{proof}

For the ``if''-part of Theorem~\ref{main} we need the following lemma.

\begin{lemma}\label{rankm}
Assume that $\{P_\lambda:\pr^n\to\pr^m\}_{\lambda\in\Lambda}$ is
a transversal family of mappings. Then for every $a\in\mathbb R^n$, for
every $m$-dimensional $C^1$-submanifold $S\subset\mathbb R^n$ containing $a$ 
and for 
$\mathcal L^l$-almost all $\lambda\in\Lambda$ there exist $\gamma>0$ and $r>0$ 
such 
that $|P_\lambda(x)-P_\lambda(y)|\ge\gamma|x-y|$ for all $x,y\in B(a,r)\cap S$.
\end{lemma}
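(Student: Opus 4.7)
The plan is to reduce the lemma to showing injectivity of the restricted differential at $a$, and to prove that injectivity via transversality together with an $\epsilon$-net on the unit sphere of $T_aS$.

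\emph{Reduction.} It suffices to show that $A_\lambda := DP_\lambda(a)|_{T_aS}\colon T_aS\to\mathbb R^m$ is injective for $\mathcal L^l$-almost every $\lambda\in\Lambda$. Indeed, $\dim T_aS = m$, so injectivity of $A_\lambda$ means bijectivity; combined with the joint $C^1$-regularity of $(\lambda,x)\mapsto P_\lambda(x)$ from condition~(1) and the $C^1$-structure of $S$, the inverse function theorem applied to $P_\lambda|_S$ produces $\gamma(\lambda),r(\lambda)>0$ with $|P_\lambda(x)-P_\lambda(y)|\ge\gamma|x-y|$ on $B(a,r)\cap S$.

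\emph{Directional estimate.} Fix a relatively compact $\Lambda'\subset\Lambda$ and a compact neighbourhood $K$ of $a$. Transversality conditions~(3) and~(4) imply, via the standard submersion/implicit-function argument (cf.\ \cite[Lemma 7.3]{PS}),
\[
\mathcal L^l\bigl(\{\lambda\in\Lambda':|T_{x,y}(\lambda)|<\eta\}\bigr)\le C\eta^m
\]
for all distinct $x,y\in K$ and $\eta\in(0,C_T]$, with $C$ independent of $(x,y)$. For a fixed unit $w\in T_aS$ and $y_t := a+tw\in K$ ($t>0$ small), the joint $C^1$-regularity of $P$ yields $T_{y_t,a}(\lambda)\to A_\lambda w$ uniformly on $\Lambda'$ as $t\to 0^+$, so the estimate passes to the limit:
\[
\mathcal L^l\bigl(\{\lambda\in\Lambda':|A_\lambda w|<\eta\}\bigr)\le C\eta^m,\quad\eta\in(0,C_T/2),
\]
with $C$ independent of $w$.

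\emph{$\epsilon$-net.} On $\Lambda'$ the operator norm $\|A_\lambda\|$ is uniformly bounded by some $M$. Take an $\epsilon$-net $\{w_i\}_{i=1}^{N_\epsilon}$ of the unit sphere of $T_aS$ with $N_\epsilon\le C_0\epsilon^{1-m}$. If $A_\lambda$ is not injective, there is a unit $w\in T_aS$ with $A_\lambda w=0$; any $w_i$ with $|w-w_i|<\epsilon$ then satisfies $|A_\lambda w_i|\le M\epsilon$. Summing the directional bound over the net,
\[
\mathcal L^l\bigl(\{\lambda\in\Lambda':A_\lambda\text{ not injective}\}\bigr)\le N_\epsilon\cdot C(M\epsilon)^m\le C'\epsilon,
\]
and letting $\epsilon\to 0^+$ gives measure zero on $\Lambda'$; exhausting $\Lambda$ by such sets proves the lemma.

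The main obstacle is controlling the limit $t\to 0^+$ in the directional estimate: the submersion constant $C$ for $T_{y_t,a}$ must remain bounded as $|y_t-a|\to 0$, which is precisely what the uniformity in conditions~(2) and~(4) guarantees. A secondary technical point is that condition~(1) provides only joint $C^1$-regularity rather than $C^2$-regularity in $x$, so the limit $T_{y_t,a}(\lambda)\to A_\lambda w$ must be read from the ordinary $C^1$ Taylor expansion of $P_\lambda$ at $a$, without invoking mixed partials $D_xD_\lambda P$.
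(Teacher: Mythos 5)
Your proof is correct but takes a genuinely different route from the paper's. The paper proceeds in two geometric steps: first it shows that $P_\lambda$ is a submersion for \emph{every} $\lambda\in\Lambda$ (not merely almost every), arguing by contradiction with the help of \cite[Lemma 3.2]{JJN}, which extracts from transversality an $m$-plane $V^e\subset\pr^l$ on which $\lambda\mapsto D_xP_\lambda(a)(e)$ is a diffeomorphism; then it shows $\ker D_xP_\lambda(a)\cap W=\{0\}$ for $\mathcal L^l$-a.e.\ $\lambda$ and any fixed $m$-plane $W$, via a dimension count on the auxiliary maps $f^{e_i}$ together with Fubini, and finally takes $W=T_aS$ and invokes the local fibration structure of a submersion. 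Your proof instead goes straight to the quantitative heart of transversality: the Peres--Schlag sub-level-set estimate $\mathcal L^l(\{\lambda\in\Lambda':|T_{x,y}(\lambda)|<\eta\})\le C\eta^m$, which you push to the limit $y\to a$ along $w\in T_aS$ (using the uniform $C^1$ modulus from condition (1) over compacta) to get the directional bound $\mathcal L^l(\{|D_xP_\lambda(a)w|<\eta\})\le C\eta^m$, and then kill the bad set with an $\epsilon$-net on the $(m-1)$-sphere in $T_aS$: $N_\epsilon(M\epsilon)^m\lesssim\epsilon\to 0$. Both arguments are sound. Your route is shorter and more transparent once the Peres--Schlag measure estimate is available, and it handles the reduction to the conclusion via the inverse function theorem on $P_\lambda|_S$ rather than via the fibration remark; the paper's route avoids the measure estimate entirely, is more self-contained modulo \cite{JJN}, and yields the (slightly stronger, though unused) byproduct that the submersion property holds for all $\lambda$ rather than almost all. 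One small point worth making explicit in your write-up is the measurability of $\{\lambda\in\Lambda':A_\lambda\text{ not injective}\}$; it is closed since $\lambda\mapsto A_\lambda$ is continuous and non-injectivity of a square matrix is a closed condition, so the union bound and $\epsilon\to 0$ limit are legitimate.
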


\begin{proof} 
We begin by showing that $P_\lambda$ is a submersion, that is,
$D_xP_\lambda(a)$ has rank $m$ at every point $a\in\mathbb R^n$. Here
$D_xP_\lambda$ is the derivative of $P_\lambda$
with respect to $x$.

Let $\lambda_0\in\Lambda$ and let $\ker D_xP_\lambda(a)\subset\mathbb R^n$ be 
the kernel of $D_xP_\lambda(a)$. 
By \cite[Lemma 3.2]{JJN}, Definition~\ref{transversality}  
implies that for any unit vector 
$e\in\ker D_xP_{\lambda_0}(a)$ one can find an $m$-dimensional plane 
$V^e\subset\mathbb R^l$ such that the mapping 
$g^e:V_{\lambda_0}^e\cap\Lambda\to\mathbb R^m$, defined as 
$g^e(\lambda) :=D_xP_\lambda(a)(e)$, is a diffeomorphism (onto its image) 
on a small neighbourhood of $\lambda_0$. Furthermore, the parallelepiped
$Dg^e(\lambda_0)([-1,1]^m)$ is uniformly thick -- by this we mean that 
the lengths of 
the edges and the angles between the 
edges are bounded from below by a constant which is independent of 
$\lambda_0\in\Lambda$, $e\in\ker D_xP_{\lambda_0}(a)$ and $a\in K$ for any fixed
compact $K\subset\mathbb R^n$. 

Since $D_xP_\lambda(a)$ is continuous in $\lambda$ and 
$\dim\ker D_xP_{\lambda}(a)\ge n-m$ for all $\lambda\in\Lambda$ there is
$e\in\ker D_xP_{\lambda_0}(a)$ such that 
$e=\lim_{\lambda\to\lambda_0}e_\lambda$,
where $e_\lambda\in\ker D_xP_{\lambda}(a)$. Define a function
$f^e:V_{\lambda_0}^e\cap\Lambda\to\mathbb R^n$ by 
\[
f^e(\lambda) := e-\proj_{\ker D_xP_\lambda(a)}(e), 
\]
where $\proj_V$ is the 
orthogonal projection onto $V\subset\mathbb R^n$. Observe that 
$g^e(\lambda)=D_xP_\lambda(a)(f^e(\lambda))$. The fact that 
$Dg^e(\lambda_0)([-1,1]^m)$
is uniformly thick implies that the same is true for 
$Df^e(\lambda_0)([-1,1]^m)$.

Assuming that $\dim\ker D_xP_{\lambda_0}(a)>n-m$ there are at most 
$m-1$ directions perpendicular to $\ker D_xP_{\lambda_0}(a)$.
Thus $Df^e(\lambda_0)([-1,1]^m)$ intersects $\ker D_xP_{\lambda_0}(a)$
in a set containing a line segment of positive length. In 
particular, there is a unit vector $v\in V^e$ satisfying 
$Df^e(\lambda_0)(v)\in\ker D_xP_{\lambda_0}(a)$ which, in turn, gives
the contradiction $Dg^e(\lambda_0)(v)=0$ and completes the proof that
$P_\lambda$ is a submersion. 
 
We proceed by verifying that for every 
$a\in\mathbb R^n$ and for every $m$-dimensional linear subspace
$W\subset\mathbb R^n$ we have $\ker D_xP_\lambda(a)\cap W=\{0\}$ for 
$\mathcal L^l$-almost all $\lambda\in\Lambda$. 

Fix $\lambda_0\in\Lambda$ such that $\ker D_xP_{\lambda_0}(a)\cap W=U$ with 
$\dim U=k$, where $1\le k\le m$. Clearly, it is sufficient to prove that
there is $\delta>0$ such that $\ker D_xP_\lambda(a)\cap W=\{0\}$ 
for $\mathcal L^l$-almost all $\lambda\in B(\lambda_0,\delta)$.
Let $e_1,\dots,e_k$ be an orthonormal basis for $U$ and let
$M :=\langle W\cup\ker D_xP_{\lambda_0}(a)\rangle$ be the subspace spanned by $W$
and $\ker D_xP_{\lambda_0}(a)$. Observe that $k=\dim M^\perp$.
For all $i=1,\dots,k$, consider the functions $f^{e_i}$ defined above. 
Since $P_\lambda$ is a submersion for all $\lambda$, we see that
$\ker D_xP_\lambda(a)$ tends to $\ker D_xP_{\lambda_0}(a)$ as 
$\lambda\to\lambda_0$. Thus $Df^{e_i}(\lambda_0)([-1,1]^n)$ is perpendicular to
$\ker D_xP_{\lambda_0}(a)$ for all $i=1,\dots,k$. 
In particular, for each $i$ there is a 
$k$-dimensional plane $W^{e_i}\subset V^{e_i}$ such that
$Df^{e_i}(\lambda_0)(W^{e_i})=M^\perp$. This implies the existence of
$v\in\mathbb R^l$ such
that $Df^{e_1}(\lambda_0)v,\dots,Df^{e_k}(\lambda_0)v$ are linearly independent.
Hence, for a sufficiently small $\varepsilon>0$ we have 
$\ker D_xP_\lambda(a)\cap W=\{0\}$ for $\mathcal L^l$-almost all  
$\lambda\in B(\lambda_0,\varepsilon)\cap \langle v\rangle_{\lambda_0}$. 
By continuity, there
exists $\delta>0$ such that this is valid if we replace $\lambda_0$ by 
any $\lambda_1\in B(\lambda_0,\delta)$. Finally, Fubini's theorem implies that
$\ker D_xP_\lambda(a)\cap W=\{0\}$ for $\mathcal L^l$-almost all  
$\lambda\in B(\lambda_0,\delta)$.

The claim follows by choosing $W=T_aS$ and using the fact that since 
$P_\lambda$ is a smooth submersion it is locally a fibration (see 
\cite[Remark 1.92]{GHL}).
\end{proof}

Now we are ready to prove the generalization of the 
Besicovitch-Federer projection theorem.

\begin{proof}[Proof of Theorem \ref{main}]
The proof of the ``only if''-part of Theorem \ref{main} is similar to the
one given in \cite[p. 257-258]{Ma}. Indeed, defining $E_{1,\delta}(\lambda)$
and $E_{2,\delta}(\lambda)$ as in Lemmas~\ref{E1} and \ref{E2}, setting
\[
E_{3,\delta}(\lambda) := \{ a \in E \mid (E \setminus \{a\}) \cap P_\lambda^{-1}
  (P_\lambda(a)) \cap B(a, \delta) \neq \emptyset \},
\]
and applying Lemmas~\ref{E1}, \ref{E2} and \ref{E1-3}, we conclude,
as in \cite[p. 257-258]{Ma}, that the claim holds. 
 
To prove the ``if''-part of the theorem, assume to the contrary that 
there is an $m$-rectifiable
$F\subset E$ with $\mathcal H^m(F)>0$. According to 
\cite[Theorem 3.2.29]{F}, there exist $m$-dimensional $C^1$-submanifolds 
$S_1,S_2,\ldots\subset\mathbb R^n$ such that 
$\mathcal H^m(F\setminus\cup_{i=1}^\infty S_i)=0$. 

Fixing $i$ and letting $a$ be 
a density point of $F\cap S_i$, Lemma~\ref{rankm} implies the existence of
$\gamma>0$ and $r>0$ such that for $\mathcal L^l$-almost all 
$\lambda\in\Lambda$ we have
$\vert P_\lambda(x)-P_\lambda(y)\vert\ge\gamma\vert x-y\vert$ for all
$x,y\in B(a,r)\cap S_i$.
This in turn gives that 
$\mathcal H^m(P_\lambda(E))\ge\gamma^m\mathcal H^m(F\cap B(a,r)\cap S_i)>0$
for $\mathcal L^l$-almost all $\lambda\in\Lambda$ which is a 
contradiction.
\end{proof}

\begin{remark}\label{lipschitz}
In the ``only if''-part of the previous proof we did not use the assumption 
that the mapping $(\lambda,x)\mapsto P_\lambda(x)$ is continuously 
differentiable in $x$ 
(see Definition~\ref{transversality}). It is sufficient to suppose that 
it is Lipschitz 
continuous. The differentiability in the second coordinate is needed only for 
the ``if''-part of Theorem~\ref{main}.
\end{remark}

\section{Dynamics of the geodesic flow}\label{application}

\subsection{Pairs of pants and right angle octagons}\label{octagons}

The contents of this subsection and the following one are standard, see e.g.
\cite{Se}. Suppose $S$ is a pair of pants with cuff lengths $a, b$ and $c$ (See Figure
\ref{labelling}).
The {\it {seams}} of $S$ are the shortest geodesic segments connecting the
cuffs. Consider the seam connecting the cuffs $a$ and $c$ and code by $\be$ and
$\ob$ the two sides of this seam. Analogously, consider the seam connecting the
cuffs $b$ and $c$ and code by $\al$ and $\oa$ its two sides. If we cut $S$
along these two seams, we obtain a hyperbolic octagon with right angles. We
label the four sides of this octagon corresponding to the cut seams by the code
of the part of $S$ inside the octagon. The $c$ cuff is cut into two geodesics
of length $c/2$, which we label as $c_1$ and $c_2$. We see consecutively the
labels $\al$, $b$, $\oa$, $c_1$, $\ob$, $a$, $\be$ and $c_2$ on the sides of 
the octagon (up to possibly exchanging the role of $\al$ and $\oa$, $\be$ and 
$\ob$, or $c_1$ and $c_2$).
Let $R$ be a copy of the octagon inside the hyperbolic space $\mathbb H^2$. For
$\tau=\al$, $\oa$, $\be$ or $\ob$, let $\varphi_\tau$ be the M\"obius
transformation sending the geodesic $\tau$ on the geodesic $\overline\tau$
(with the convention that $\overline{\overline\tau} = \tau$) and the
half-plane separated by the complete extension of $\tau$ containing $R$ onto
the half-plane separated by the complete extension of $\overline\tau$ not
containing $R$. We have $\varphi_{\overline\tau}=\varphi_\tau^{-1}$ for all
$\tau$. The union of $S$ and its boundary $\partial S$ is obtained from the 
closure of $R$
by identifying the sides $\al$ and $\oa$ using $\varphi_\al$ and by identifying
$\be$ and $\ob$ using $\varphi_\be$. Moreover, the geodesics extending the
$\tau$ sides do not intersect one another, and therefore, 
by the classical ping-pong
argument, $\varphi_\al$ and $\varphi_\be$ generate a free group $G$. The
images of the interior of $R$ by $G$ are disjoint and the region containing $R$ and
delimited by the four extensions of the $\tau$ geodesics is a fundamental domain for
$G$. For all $g\in G$, label the geodesic sides of $gR$ by the image of the
labelling of the geodesic sides of $R$. In a consistent way, each geodesic 
segment of the form $g\tau$ has two opposite labels corresponding to the two 
images of $R$ that it separates.

\begin{figure}
 \begin{center}
 \input{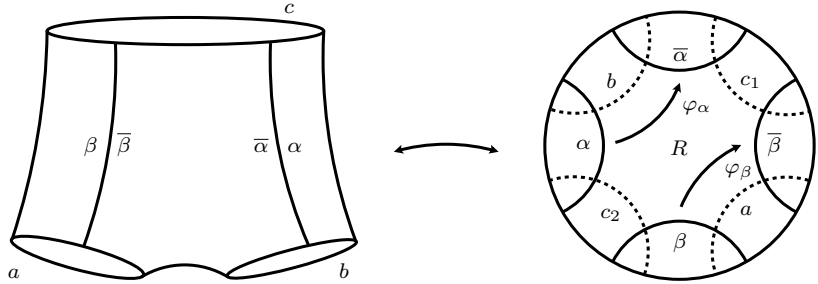}\\
 \centering\caption{Pair of pants and the labelling of the sides of the octagon $R$.}
   \label{labelling}
 \end{center}
\end{figure}

We say that a geodesic $\gamma$ in $T^1\mathbb H^2$ starts from $R$ if 
$\gamma(0)\in\partial R$ and there is some $t>0$ with $\gamma(t)\in R$. 
Let $\ga$ be a geodesic starting from $R$. It corresponds to
a geodesic in $C(S)$ (recall the definition \eqref{PiNW}), if and only if it
never cuts the sides of $G(R)$ labelled
as $a$, $b$, $c_1$ or $c_2$. In other words, $\ga$ intersects only $\tau$
geodesics. Record the interior label of these geodesics successively as
$\om_n$, $n\in\mathbb Z$, $\om_0$ being the label of the side by which the
geodesic $\ga$ enters $R$. This sequence is called the {\it{cutting sequence}}
of $\ga$. The cutting sequence of any geodesic in $C(S)$ is a reduced infinite
word in $\al$, $\oa$, $\be$ and $\ob$, where reduced means that the succession
$\tau\overline\tau$ is not permitted. Since two infinite geodesics in
$\mathbb H^2$ with distinct supports are not at a bounded distance from each
other, any cutting sequence is the cutting sequence of a unique geodesic. The
boundary geodesics correspond to the reduced words $(\al)^\infty$,
$(\oa)^\infty$, $(\be )^\infty$, $(\ob )^\infty$, $(\al\ob)^\infty$ and
$(\oa\be)^\infty$.

Consider the four disjoint complete geodesics in $\mathbb H^2$ extending the
segments $\al, \oa, \be$ and  $\ob$ of the previous subsection. Each of them
cut $S^1$, the circle at infinity, into two intervals. Write $A$,
$\overline A$, $B$ and $\overline B$ for the interval separated from $R$ by the
geodesic $\al$, $\oa$, $\be$ and $\ob$, respectively. Let $\varphi$ be defined
on each $T\;(T= A,\overline A,B,\overline B)$ by the corresponding M\"obius
transformation $\varphi_\tau$. The mapping $\varphi$ is expanding (see
\cite{Se}) and $\varphi(T) = S^1\setminus\Int\overline T$, where the interior of a set
in $S^1$ is denoted by $\Int$. In particular,
$\varphi(T)$ contains the three intervals different from $\overline T$.

We define the boundary expansion of a point $\xi\in S^1$. If $\xi$ does not
belong to $\Int(A\cup\overline A\cup B\cup\overline B)$, stop here. Otherwise,
let $\xi_0=\al$, $\oa$, $\be$ or $\ob$ accordingly. Apply then the procedure to
$\varphi(\xi)$ and iterate. Every point has an empty, finite or infinite
sequence of symbols attached, which is called its {\it{boundary expansion.}}
Boundary expansions are reduced words in $\al$, $\oa$, $\be$ and $\ob$. The set
of points with an infinite boundary expansion is a Cantor subset
$\Om\subset S^1$.
For a geodesic $\ga$ starting in $R$, the positive part of the coding sequence
is the boundary expansion  of the limit point $\ga (+\infty)$. Similarly, the
sequence $\overline{\om_0},\overline{\om_{-1}},\overline{\om_{-2}},\dots$ is
the boundary expansion of $\ga(-\infty )$. This defines a one-to-one
correspondence $\Psi$ between cutting sequences of geodesics starting from $R$
and the set
\[
(\Om\x\Om)^\ast=\{(\xi,\eta)\in\Om\x\Om\mid\xi_0\ne\eta_0\},
\]
namely, $\Psi(\om)=(\xi,\eta)$ where $\xi_i=\om_{i+1}$ and
$\eta_j=\overline{\om_{-j}}$ for $i,j=0,1,\dots$

Clearly, if $\{\om_n\}_{n\in\mathbb Z}$ is the cutting sequence of the geodesic
$\ga$, the shifted sequence $\{\om_n'\}_{n\in\mathbb Z},\om_n' = \om_{n+1}$ is
associated to the geodesic $\ga(\cdot + \ell)$, where  $\ell$ is the first
positive time $t$ when $\ga(t)$ is not in $R$. Consider the mapping
\[
\Phi : \{(\xi,\eta,s)\mid (\xi,\eta)\in(\Om\times\Om)^\ast,
  0\le s<\ell(\Psi^{-1}(\xi,\eta))\}\longrightarrow T^1\mathbb H^2
\]
which associates to $(\xi,\eta,s)$ the point $(x,v)\in T^1\mathbb H^2$ such
that the geodesic $\ga$ with initial condition $(x,v)$ satisfies
$\ga(+\infty)=\xi$, $\ga(-\infty)=\eta$ and
$\ga(-s)$ is entering into $R$. The mapping $\Phi$ is a restriction of the
usual chart of $T^1\mathbb H^2$ given by $(S^1\times S^1)^\ast\times\mathbb R$.
Its image is a subset of $T^1R$ which is identified with
$NW=D(S)\cup T^1(\partial S)$ (recall \eqref{NW}). Metric properties of $NW$,
and consequently those of
$C(S)$, will be read from metric properties of $\Omega$ through this
Lipschitz mapping $\Phi$. Moreover, from the above symbolic representation, we
see that $NW$ is the nonwandering set of the geodesic flow on
$T^1S\cup T^1(\partial S)$. The geodesic flow,
restricted to $D(S)\cup T^1(\partial S)$, is therefore
represented by a suspension over the set of reduced words with suspension
function $\ell(\om)$, where $\ell(\om)$ is the time spent in $R$ by the
geodesic with cutting sequence $\om$.

\subsection{Markov repellers}\label{Markov}

We use properties of Markov repellers as established by Bowen and Ruelle
\cite{R1}. A Markov repeller is an expanding piecewise $C^{1+\alpha}$ map of the
real line into itself with a finite family of disjoint intervals $A_i, i\in J$,
such that if $f(A_i)$ intersects the interior of some $A_j$, then $f(A_i)$
contains $A_j$. The set of points which remain in $\cup_{j\in J}A_j$ under
applications of all the iterates $f^n$, $n\in\mathbb N$, is a Cantor set $X$.
The set $X$ is invariant under $f$. For any $f$-invariant probability measure
$\mu$ on $X$ consider the metric entropy $h_\mu(f)$. For any continuous
function $g$ on $X$, define the {\it {pressure}} $P(g)$ by
\[
P(g):=\sup_\mu\left\{h_\mu(f) + \int_X g\,d\mu\right\},
\]
where $\mu$ varies over all $f$-invariant probability measures on $X$. Assume
that $f$ is topologically transitive. Then there exists a unique $s$ with
$0<s<1$ such
that $P(-s\ln|f'|)=0$. The number $s$ is both the Hausdorff dimension and the
packing dimension of $X$. More precisely, there exists a unique $f$-invariant
probability measure $\mu_0$ on $X$ such that
\[
h_{\mu_0}-s\int_X\ln|f'|\,d\mu_0=0.
\]
The measure $\mu_0 $ is Ahlfors $s$-regular on $X$: for all $\varepsilon $
small enough and for all $x\in X$ the ratio
$\mu(B(x,\varepsilon))\varepsilon^{-s}$ is bounded away from 0 and infinity. In
particular, $0<\mathcal H^s(X)<\infty$.

Since the Patterson measure $\nu_0 $ is also Ahlfors regular \cite[Section 3]{S1}, the
measures $\nu _0 $ and $\mu _0 $ are mutually absolutely continuous with bounded densities.
The geodesic flow invariant measure $m$ constructed in \cite[Section 4]{S1} (called the
\linebreak
Bowen-Margulis-Patterson-Sullivan measure) has support $D(S)$, is the measure of maximal
entropy $s$ for the geodesic flow on $T^1S$ and has dimension $1 + 2s$.

Finally, if $(a,b,c)\mapsto f_{a,b,c}$ is a real analytic family of piecewise
$C^{1+\alpha}$ expanding mappings, then the function $(a,b,c)\mapsto\dimH(X)$ is
real analytic as well (see for example
\cite[Corollary 7.10 and Section 7.28]{R2}).

\subsection{Proof of Theorem \ref{negligible}}

For fixed $a,b$ and $c$, consider the set $\Om_{a,b,c}\subset S^1$ of the
previous subsection. It is a transitive Markov repeller for the mapping
$\varphi_{a,b,c}$. The mapping $\varphi_{a,b,c}$ is given by a piecewise
M\"obius transformation, and therefore, it belongs to 
a semi-algebraic variety of
piecewise analytic mappings. Moreover, $(a,b,c)\mapsto\varphi_{a,b,c}$ is real
analytic, and thus the function $(a,b,c)\mapsto\dd(a,b,c)=\dimH(\Om_{a,b,c})$
is real analytic. In particular, there is a two-dimensional submanifold of
values $a,b$ and $c$ such that $\dd(a,b,c)=1/2$.

\begin{proposition}\label{2dim}
Assume that $\dd(a,b,c)=1/2$. Then the nonwandering set $NW$ is purely
2-unrectifiable and has positive and finite 2-dimen\-sional Hausdorff measure.
\end{proposition}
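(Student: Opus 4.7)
The plan is to extract both statements from the symbolic representation developed in Subsection~\ref{octagons}. Via the chart $\Phi$, the set $NW\setminus T^1(\partial S)$ is bi-Lipschitz equivalent to the suspension
\[
\Sigma := \{(\xi,\eta,s) \pv (\xi,\eta)\in(\Om\x\Om)^\ast,\ 0\le s<\ell(\Psi^{-1}(\xi,\eta))\} \subset \pr^3,
\]
while $T^1(\partial S)$ is $1$-dimensional and hence $\mathcal{H}^2$-null. Since $\Phi$ is bi-Lipschitz, the $\mathcal{H}^2$-measure and the $2$-rectifiability of $NW$ coincide with those of $\Sigma$.

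For the first assertion, I would use the Ahlfors $\frac12$-regular Patterson measure $\mu_0$ on $\Om$ provided by Subsection~\ref{Markov} (applied with $s=\dd=\frac12$). A direct estimate shows that $\mu_0\otimes\mu_0$ is Ahlfors $1$-regular on $(\Om\x\Om)^\ast$, and since the return time $\ell$ is continuous and pinched between two positive constants, $(\mu_0\otimes\mu_0)\otimes\mathcal{L}^1$ is Ahlfors $2$-regular on $\Sigma$. Therefore $0<\mathcal{H}^2(\Sigma)=\mathcal{H}^2(NW)<\infty$.

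For pure $2$-unrectifiability, suppose toward a contradiction that $F\subset NW$ is $2$-rectifiable with $\mathcal{H}^2(F)>0$. Then $F':=\Phi^{-1}(F)\subset\Sigma\subset\pr^3$ is still $2$-rectifiable of positive $\mathcal{H}^2$-measure. For each coordinate projection $\pi_i:\pr^3\to\pr$, $i\in\{1,2\}$, the image $\pi_i(F')$ is contained in $\Om$, and $\mathcal{L}^1(\pi_i(F'))=0$ because $\dimH\Om=\frac12<1$. Federer's coarea formula \cite[3.2.22]{F} applied to the $2$-rectifiable set $F'$ and the Lipschitz map $\pi_i$ gives
\[
0=\int_\pr\mathcal{H}^1\bigl(F'\cap\pi_i^{-1}(t)\bigr)\,dt=\int_{F'}J_2\pi_i(p)\,d\mathcal{H}^2(p),
\]
so $J_2\pi_i$ vanishes $\mathcal{H}^2$-a.e.\ on $F'$. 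Since $J_2\pi_i(p)=0$ precisely when the approximate tangent $2$-plane $V_p$ to $F'$ at $p$ is contained in $\ker\pi_i=\{x_i=0\}$, combining the cases $i=1$ and $i=2$ forces $V_p\subset\{x_1=x_2=0\}$, a $1$-dimensional line, for $\mathcal{H}^2$-a.e.\ $p\in F'$, contradicting $\dim V_p=2$.

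The main point requiring care is the bi-Lipschitz character of $\Phi$, which is needed both for transferring Ahlfors regularity to $NW$ and for transporting the hypothetical rectifiable set $F$ into a Euclidean setting where the coordinate projections and the coarea formula can be invoked. This bi-Lipschitz property follows because $\Phi$ is the restriction of a smooth chart of $T^1\mathbb{H}^2$ in which the stable horocycle, unstable horocycle, and flow directions are transverse.
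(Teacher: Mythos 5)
Your proposal is correct, and the Ahlfors regularity part coincides with the paper's: you use the $\frac12$-regular measure $\mu_0$ on $\Om$, take the product with $\mathcal L^1$, and transfer $0<\mathcal H^2<\infty$ to $NW$ through a bi-Lipschitz identification (the paper packages this as a finite cover $D(S)=\bigcup U_i$ of bi-Lipschitz images of relatively open pieces of $\Om\times\Om\times I$, whereas you work directly with the suspension $\Sigma$; these are the same thing once one notes $(\Om\times\Om)^\ast$ is a finite union of cylinders bounded away from the diagonal).

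The unrectifiability step is where you diverge genuinely. The paper invokes \cite[Example 15.2]{Ma} for the fact that the product of two $\frac12$-dimensional Cantor sets is purely $1$-unrectifiable, and then passes from $\Om\times\Om$ purely $1$-unrectifiable to $\Om\times\Om\times I$ purely $2$-unrectifiable. You instead argue directly in $\pr^3$ via the coarea formula: if $F'\subset\Sigma$ were $2$-rectifiable of positive measure, then since both coordinate projections $\pi_1,\pi_2$ land inside $\Om$ (of dimension $\frac12<1$, hence $\mathcal L^1$-null), the coarea formula forces the tangential Jacobian of each $\pi_i$ to vanish $\mathcal H^2$-a.e.\ on $F'$, so the approximate tangent $2$-plane $V_p$ lies in $\ker\pi_1\cap\ker\pi_2$, a line --- a contradiction. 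This is a clean, self-contained argument; in particular it bypasses entirely the step ``purely $1$-unrectifiable times an interval is purely $2$-unrectifiable,'' which the paper asserts without comment and which would itself need an argument of roughly the same coarea/slicing flavour. One small notational slip: what you call $J_2\pi_i$ should be the ($1$-dimensional) tangential coarea Jacobian $J_1^{F'}\pi_i$, the norm of the component of $\nabla\pi_i=e_i$ along $V_p$; the vanishing criterion and the conclusion $V_p\subset\ker\pi_i$ are stated correctly. Also, strictly speaking $\Sigma$ sits inside $S^1\times S^1\times\pr$ rather than $\pr^3$, but since $\Om$ stays inside one of the arcs $A,\overline A,B,\overline B$ one can pass to Euclidean coordinates as both you and the paper implicitly do.
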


\begin{proof}
It is enough to consider $D(S)$ since $T^1(\partial S)$ is 1-dimensional.
Recalling that $\tau\overline\tau$ is a forbidden word for $\xi\in\Omega$, the
above discussion implies that $D(S)=\cup_{i=1}^nU_i$ and each $U_i$ is Lipschitz
equivalent to an open subset of $\Om\x\Om\x I$, where $I$ is a real interval.
Since the measure $\mu_0$ is Ahlfors $1/2$-regular on $\Om$, the measure
$\mu_0\x\mu_0\x\mathcal L^1$ is Ahlfors $2$-regular on $\Om\x\Om\x I$.
Therefore $\dimH(\Om\x\Om\x I)=2$ and $0<\mathcal H^2(\Om\x\Om\x I)<\infty$.
Thus $\dimH(D(S))=2$ and $0<\mathcal H^2(D(S))<\infty$. For the first claim it 
is enough to notice that $\Om\times\Om$ is purely $1$-unrectifiable, since it 
is a product of two Cantor sets of dimension $1/2$ \cite[Example 15.2]{Ma}. Thus the product
$\Om\times\Om\times I$ is purely $2$-unrectifiable, and so is $D(S)$.
\end{proof}

Now we are ready to prove Theorem \ref{negligible}.

\begin{proof}[Proof of Theorem \ref{negligible}]
In \cite[Section 3]{JJL} it is shown that locally there exist an open set
$U\subset T^1S$, bi-Lipschitz mappings $\psi_1:U\to I^3$ and
$\psi_2:I^2\to\Pi(U)$ and a smooth mapping $P:I^3\to I^2$ such that
\[
 \Pi|_U=\psi_2\circ P\circ\psi_1,
\]
where $I\subset\pr$ is the open unit interval. The mapping $P$ is defined by
$P(y_1,y_2,t)=(P_t(y_1,y_2),t)$, where $\{P_t:I^2\to I\}_{t\in I}$ is a
transversal family of smooth mappings.

By Proposition~\ref{2dim}, the set $\psi_1(D(S)\cap U)=E\times I$ is purely
$2$-unrectifiable. Thus $E\subset I^2$ is purely $1$-unrectifiable. 
Furthermore,
$P(E\times I) = \bigcup_{t\in I}P_t(E)\times\{t\}$. By Theorem \ref{main},
\[
\hm^1(P_t(E))=0\text{ for }\mathcal L^1\text{-almost all }t\in I,
\]
giving $\hm^2(P(E \times I)) = 0$ by Fubini's theorem. This implies that
\[
\begin{aligned}
 \hm^2(\Pi(D(S)\cap U))&=\hm^2((\psi_2\circ P\circ\psi_1)(D(S)\cap U))\\
 &=\hm^2(\psi_2(P(E\times I)))=0,
\end{aligned}
\]
since $\psi_2$ is a bi-Lipschitz mapping. The claim follows from the fact that 
$T^1S$ can be covered by countably many open sets $U$.
\end{proof}

\begin{corollary}
 The Bowen-Margulis-Patterson-Sullivan measure $m$
is 2-dimensional, its support $\spt m = D(S)$ is 2-dimensional and
$\mathcal L^2(\spt(\Pi_*m))=0$.
\end{corollary}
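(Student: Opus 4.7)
The plan is to combine the structural results already established in the paper. For the measure $m$ to be $2$-dimensional one needs the critical exponent $s=\delta(a,b,c)=1/2$, which is precisely the borderline case to which the corollary implicitly refers. Under that hypothesis, the summary in Subsection~\ref{Markov} identifies $m$ as the Bowen-Margulis-Patterson-Sullivan measure with $\spt m = D(S)$ and Hausdorff dimension $1+2s=2$, giving the first assertion. The equality $\dimH D(S) = 2$ (together with $0<\mathcal H^2(D(S))<\infty$) is exactly Proposition~\ref{2dim}, so the second assertion is immediate.

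The remaining task is the Lebesgue negligibility of $\spt(\Pi_*m)$. My first step would be to establish the inclusion $\spt(\Pi_*m)\subset C(S)$. Since the support of a pushforward is always contained in the closure of the image of the source support, this reduces to checking that $C(S)=\Pi(D(S))$ is closed in $S$, which in turn follows from the compactness of $D(S)$. Compactness is visible from the symbolic model $\Phi$ of Subsection~\ref{octagons}: $D(S)$ is the continuous image of a suspension over the compact Cantor product $(\Omega\times\Omega)^{\ast}$ with bounded roof function $\ell$, hence compact.

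With the inclusion $\spt(\Pi_*m)\subset C(S)$ in hand, Theorem~\ref{negligible} applied at $\delta=1/2$ yields $\mathcal L^2(C(S))=0$, whence $\mathcal L^2(\spt(\Pi_*m))=0$, completing the proof. I do not anticipate any genuine obstacle: the corollary is essentially a packaging of already-proved statements, with the real technical content residing in Theorem~\ref{main}, Proposition~\ref{2dim}, and Theorem~\ref{negligible}. The only point requiring a moment of care is the compactness of the nonwandering set $D(S)$, which ensures that the closure in the definition of the pushforward support does not enlarge $C(S)$; this is the mildly non-formal step in an otherwise bookkeeping argument.
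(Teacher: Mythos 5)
Your outline matches the (implicit) argument: the corollary is indeed a packaging of the dimension computation for $m$ in Subsection~\ref{Markov} (at the critical value $s=\dd=1/2$, so that $\dim m = 1+2s = 2$), of Proposition~\ref{2dim} for $\dimH D(S)=2$, and of Theorem~\ref{negligible} for $\mathcal L^2(C(S))=0$. The paper gives no separate proof of the corollary precisely because it follows by assembly.

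However, the one ``non-formal'' step you single out is the one place where your reasoning is actually wrong. $D(S)$ is \emph{not} compact. The symbolic model $\Phi$ parametrizes the full nonwandering set $NW=D(S)\cup T^1(\partial S)$, not $D(S)$ alone: the periodic words $(\al)^\infty,(\oa)^\infty,\dots$ lie in $\Om$ and correspond to the boundary geodesics. The boundary geodesics are limits of complete geodesics that spiral into a cuff, so $T^1(\partial S)\subset\overline{D(S)}$ in $T^1(S\cup\partial S)$; thus $D(S)$ fails to be closed, hence is not compact, and $C(S)=\Pi(D(S))$ is not closed in $S\cup\partial S$. (If $D(S)$ were compact it could not equal $\spt m$ inside $T^1S$, since $T^1\partial S$ has $m$-measure zero.)

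The conclusion is nonetheless correct, and can be repaired in either of two ways. First, $D(S)$ \emph{is} closed in $T^1S$, and $\Pi\colon T^1S\to S$ is a proper map (preimages of compacta are compacta), hence a closed map; therefore $C(S)$ is closed in $S$, $\spt(\Pi_*m)=\overline{C(S)}^{\,S}=C(S)$, and $\mathcal L^2(\spt(\Pi_*m))=\mathcal L^2(C(S))=0$ by Theorem~\ref{negligible}. Second, and more in line with your compactness idea: $NW$ is compact (it is the continuous image of the compact suspension), so $\Pi(NW)=C(S)\cup\partial S$ is compact and hence $\overline{C(S)}\subset C(S)\cup\partial S$; since $\partial S$ is a finite union of closed geodesics, $\mathcal L^2(C(S)\cup\partial S)=\mathcal L^2(C(S))=0$. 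Either patch makes your argument sound; as written, the claim ``$D(S)$ compact, hence $C(S)$ closed'' is a genuine misstep, even if a harmless one for the final estimate.
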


\end{document}